\documentclass[12pt,a4paper]{amsart}
\usepackage[colorlinks=true,linkcolor=blue,urlcolor=blue,citecolor=blue]{hyperref}
\usepackage{mlmodern}
\usepackage[dvipsnames]{xcolor} 
\usepackage{amsmath,color}
\usepackage{amsthm, enumerate}
\usepackage{amssymb}
\usepackage[all]{xy}
\usepackage{epsfig, soul}

\usepackage{geometry}


\parskip1.5pt
\geometry{a4paper,twoside,top=3cm,bottom=3cm,left=2.8cm,right=2.8cm,headsep=1cm,
headheight=3mm}

\newtheorem{theorem}{Theorem}[section]
\newtheorem{proposition}[theorem]{Proposition}
\newtheorem{corollary}[theorem]{Corollary}
\newtheorem{lemma}[theorem]{Lemma}
\theoremstyle{definition}

\newtheorem{example}[theorem]{Example}
\newtheorem{remark}[theorem]{Remark}
\newtheorem{questions}[theorem]{Question}

\def\ep{\varepsilon}
\def\E{\mathcal E}
\def\dsum{\displaystyle \sum}
\newcommand{\bff}{{\boldsymbol{f}}}
\newcommand{\C}{\mathbb{C}}

\newcommand{\Z}{\mathbb{Z}}
\def\dimsup{\overline{{\dim}}}
\def\diminf{\underline{{\dim}}}
\def\Span{{\rm span}}
\def\coe{{\rm coe \,}}

\author[D. Carando]{Daniel Carando}

\address{Departamento de Matem{\'a}tica\\ Facultad de Cs. Exactas y Naturales\\
		Universidad de Buenos Aires and IMAS-UBA-CONICET\\ Int.~G{\"u}iraldes s/n, 1428\\ Buenos Aires, Argentina.}
\email{dcarando@dm.uba.ar}

\author[C. D'Andrea]{Carlos D'Andrea}

\address{Departament de Matem\`atiques i Inform\`atica, Universitat de Barcelona. Gran
Via 585, 08007 Barcelona, Spain \&  Centre de Recerca Matem\`atica, Edifici C, Campus Bellaterra, 08193 Bellaterra, Spain}
\email{cdandrea@ub.edu}

\author[L. A. Torres ]{Leodan A. Torres}

\address{IMAS - UBA - CONICET \\
Pabell\'on I\\ Facultad de Cs. Exactas y Naturales \\ Universidad de Buenos Aires \\
(1428) Buenos Aires\\ Argentina.}

\email{ltorres@dm.uba.ar}

\author[P. Turco]{Pablo Turco}

\address{IMAS - UBA - CONICET \\
Pabell\'on I\\ Facultad de Cs. Exactas y Naturales \\ Universidad de Buenos Aires \\
(1428) Buenos Aires\\ Argentina.}
\email{paturco@dm.uba.ar}

\thanks{The first author was partially supported by CONICET PIP  11220200102366CO and
ANPCyT PICT 2018-04104.The second author was partially supported by the Spanish MINECO research project PID2019-104047GB-I00.
The third author has a posdoc fellowship from CONICET. The fourth author was partially supported by ANPCyT PICT 2019-00080 and CONICET PIP 112202001101609CO}

\subjclass[2020]{Primary 46G20, 46B06, 47B06; Secondary 46B50, 28A75}

\keywords{Entropy numbers, Box dimension, Banach spaces, Polynomials and Holomorphic functions.}

\begin{document}

\title[Entropy numbers and box dimension]{Entropy numbers and box dimension of polynomials and holomorphic functions}

\begin{abstract}
We study  entropy numbers and  box dimension of (the image of) homogeneous polynomials and holomorphic functions between Banach spaces. First, we see that entropy numbers and box dimensions of subsets of Banach spaces are related. We show that the box dimension of the image of a ball under a homogeneous polynomial is finite if and only if it spans a  finite-dimensional subspace, but this is not true for holomorphic functions. Furthermore, we relate the entropy numbers of a holomorphic function to those of the polynomials of its Taylor series expansion. As a consequence, if the box dimension of the image of a ball by a holomorphic function $f$ is finite, then the entropy numbers of the polynomials in the Taylor series expansion of $f$ at any point of the ball belong to $\ell_p$ for every $p>1$.
\end{abstract}

\maketitle

\section*{Introduction}

In this note, we study the {\it compactness} of the image of homogeneous polynomials and holomorphic functions between Banach spaces. Let $E$ and $F$ be Banach spaces, $U\subset E$ an open set and $f\colon U\rightarrow F$ a holomorphic mapping.
Whenever $f$ maps a ball $B\subset U$ onto a relatively compact set, we use entropy numbers or box dimension (see the definitions in Section \ref{Sec: Dimension y entropy numbers}) to \emph{measure} the compactness of $f(B)$.
For $x_0\in U$, let $P_mf(x_0)$ be the $m$-homogeneous polynomial of the Taylor series expansion of $f$ at $x_0$.
This article was originally motivated by the following question, posed by Richard Aron to the fourth author: given $\ep>0$ and some ball $B\subset U$, can we relate the \emph{degree of compactness} (in terms of entropy numbers or box dimension) of  $f(B)$ and that of $P_mf(x_0)(B)$? Similar  questions were addressed in \cite{AMR, ArSc, LaTur,Tur} for  other ways of measuring the compactness of a set. Also, entropy numbers and, in general, the theory of $s$-numbers and quasi $s$-numbers of multilinear operators was treated in \cite{CaRu, FMS, FMS2, FerSi}.

Before dealing with this and other similar  questions, we will see in Section \ref{Sec: Fractal dimension} that entropy numbers and box dimension are closely related. Indeed, Proposition \ref{prop: dimension} essentially states that, for a connected set $K$, the box dimension of $K$ is finite {if and only if} the entropy numbers of $K$ decay exponentially.

Also, we show an example of a connected set $K$ such that its box dimension is infinite and the sequence $(e_n(K))_n \in \ell_1$.
In Section~\ref{Sec: Dimension y entropy numbers} we study $m$-homogeneous polynomials and holomorphic functions. In particular, we relate the box dimension of the image and the linear dimension of the subspace it spans. For example, we present a function $f$ defined on the complex unit disk $\Delta$ such that, for every smaller disk $D\subset \Delta$,  the (upper) box dimension of $f(D)$ is 2  while $f(D)$ spans an infinite dimensional subspace (Examples~\ref{Example: holomorphic function} and~\ref{Example: extension}). On the other hand, we see that such an example cannot exist for $m$-homogeneous polynomials. In fact, if the image of an $m$-homogeneous polynomial spans an infinite dimensional subspace, then its box dimension must be infinite (Theorem~\ref{thm: BoxdimensionPoly}).

Finally, in Section~\ref{Sec: Holo vs pol}, for a given holomorphic function $f\colon U\rightarrow F$, $x_0\in U$ and $\ep>0$ we obtain in Lemma~\ref{Lemma: p.prin} a relationship between the entropy numbers $e_n(f(x_0+\ep B_E))$ and $e_N(P_mf(x_0)(B_E))$, where $n$ and $N$ are related (here, $B_E$ is the unit ball of $E$). As a consequence, we see in Proposition~\ref{prop: entropy en ellp} that if the (upper) box dimension of $f(x_0+\ep B_E)$ is finite, then the sequence $(e_n(P_mf(x_0)(B_E)))_n $ belongs to $ \ell_p$ for every $p>1$.
The problem of measuring $P_mf(B_E)$ (in some sense) in terms of the image of $f$ is closely related to the problem of measuring  the absolutely convex  hull of a set $K$ in terms of $K$ (see the proof of \cite[Proposition~3.4]{ArSc}). For entropy numbers, this geometric problem was studied by many authors (see, for example,  \cite{CHR, CKP, KlRu, FerSi}  and the references therein). Our Proposition~\ref{prop: entropy en ellp} seems to provide sharper results than those obtained from the proof of \cite[Proposition~3.4]{ArSc} together with the known relationships between the entropy numbers of a set and its absolutely convex hull (see the discussion after  Proposition~\ref{prop: entropy en ellp}).

\medskip 

Throughout the article, we  consider complex Banach spaces $E$ and $F$ and write $B_E$ for the open unit ball of $E$ and $\Delta$ for the open unit disc in $\mathbb C$. Also, $\ell_p^N$ stands for $\mathbb C^N$ endowed with the $\ell_p$ norm.  We write $\epsilon_j$ for  the vector/sequence which has 1 in the $j$-th coordinate and 0 elsewhere. This notation will be used both in $\mathbb C^N$ and in $\ell_p$.  

For a $m$-homogenoeus polynomial $P$, by $\overset \vee P(x_1,\ldots, x_m)$ we denote the unique symmetric $m$-linear operator such that $P(x)=\overset \vee P(x,\ldots, x)=\overset \vee P(x^m)$.


\section{Entropy numbers, covering numbers and box dimension} \label{Sec: Fractal dimension}

A possible way to refine the concept of compactness in a Banach space is via the so-called entropy numbers. Recall that the \emph{$n$-th entropy number}  $\E_n(K)$ of a set $K$ of a metric space $(X,d)$  is defined as
$$
\E_n(K)\colon=\inf\left\{\ep>0 \colon \exists \ x_1,\ldots,x_n \in X \colon K \subset \bigcup_{i=1}^{n} B_X(x_i,\ep)\right\},
$$
where $B_X(x,\ep)=\{\widetilde x \in X\colon d(x,\widetilde x)<\ep\}$.  The \emph{$n$-th dyadic entropy numbers} of $K$ are given by $e_n(K)=\E_{2^{n-1}}(K)$. A subset $K$ of a Banach space is relatively compact if and only if the sequence $(e_n(K))_{n\in \mathbb N} \in c_0$. Stronger conditions on the decay rate of $(e_n(K))_{n\in \mathbb N} \in c_0$ lead to stronger versions of compactness.

The concept of covering numbers is closely related to that of entropy numbers. For a bounded set $L\subset X$ and $\ep>0$, the \emph{$\ep$-covering number} $N(L,\ep)$ is  given by
$$
N(L,\ep)\colon =\min\left\{ n \in \mathbb N\colon \exists \ x_1,\ldots,x_n \in X \colon L\subset \bigcup_{i=1}^{n} B_X(x_i,\ep)\right\}.
$$
Also, the \emph{upper} and \emph{lower box counting dimension of $L$} are given by
$$
\dimsup_{B} L=\limsup_{\ep\to 0^+} \dfrac{\log N(L,\ep)}{-\log(\ep)} \quad \text{and} \quad
\diminf_{B}L= \liminf_{\ep\to 0^+} \dfrac{\log N(L,\ep)}{-\log(\ep)}.
$$
In the case that $\lim\limits_{\ep\to 0^+} \dfrac{\log N(L,\ep)}{-\log(\ep)}$ exists, we say that the \emph{box counting dimension} of $L$  is
$$
\dim_B L=\lim_{\ep\rightarrow 0^+} \dfrac{\log N(L,\ep)}{-\log(\ep)}.
$$
We remark that, if $L\subset X$ is not totally bounded, then $\diminf_B L=\infty$.
Also, if  $g\colon X\rightarrow Y$ is a Lipschitz function and $L\subset X$ is a bounded subset, then $\dimsup_B g(L)\leq \dimsup_B L$ and $\diminf_B g(L)\leq \diminf_B L$.
We refer to \cite{Fal} for the basics of the theory of fractal geometry.

The (linear) dimension of a subset  $U$  of a complex vector space $\mathbb V$ is the dimension of $\Span \{U\}$ and is denoted by $\dim U$.  Note that for an open and bounded set $U\subset \mathbb C$ we have  $\diminf_B U=\dimsup_B U=2$, while $\dim \ U=1$.

The following result can be found in \cite[Chapter~3.1]{Fal} for sets in $\mathbb R^N$, but the proof works line by line for general metric spaces?

\begin{proposition}\label{prop:deltas} Let $(X,d)$ a metric space and $L\subset X$ a bounded set. If $(\delta_n)_n$ is a {decreasing} sequence of real numbers such that $\lim\limits_{n\to \infty} \delta_n=0$ and $\liminf\limits_{n\to \infty} \dfrac{\delta_{n+1}}{\delta_n}>0$, then
$$
\dimsup_{B} L=\limsup_{n\to \infty} \dfrac{\log N(L,\delta_n)}{-\log(\delta_n)} \quad \quad
\diminf_{B}L= \liminf_{n\to \infty} \dfrac{\log N(L,\delta_n)}{-\log(\delta_n)}.
$$
\end{proposition}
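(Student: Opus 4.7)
The plan is the standard sandwich argument: replace the continuum $\ep>0$ by the discrete sequence $(\delta_n)$ by invoking the monotonicity of the covering function, and then absorb the mismatch between $-\log\delta_n$ and $-\log\delta_{n+1}$ in the denominators using the $\liminf$ hypothesis.

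First, since $(\delta_n)$ is decreasing with $\delta_n\to 0^+$, for each sufficiently small $\ep>0$ there exists an index $n=n(\ep)$ with $\delta_{n+1}\leq \ep <\delta_n$, and $n(\ep)\to \infty$ as $\ep\to 0^+$. The map $\ep\mapsto N(L,\ep)$ is non-increasing, hence
$$
N(L,\delta_n)\leq N(L,\ep)\leq N(L,\delta_{n+1}).
$$
Combining this with $-\log\delta_n< -\log\ep\leq -\log\delta_{n+1}$ yields the basic sandwich
$$
\frac{\log N(L,\delta_n)}{-\log\delta_{n+1}} \leq \frac{\log N(L,\ep)}{-\log\ep} \leq \frac{\log N(L,\delta_{n+1})}{-\log\delta_n}.
$$

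The next step is to show that $(-\log\delta_{n+1})/(-\log\delta_n)\to 1$, which allows us to match numerators with their ``correct'' denominators. Setting $c:=\liminf_n \delta_{n+1}/\delta_n>0$, for all large $n$ we have $\delta_{n+1}/\delta_n\geq c/2$, hence $-\log\delta_{n+1}\leq -\log\delta_n + K$ with $K:=\log(2/c)<\infty$. Together with $-\log\delta_{n+1}\geq -\log\delta_n$ (from monotonicity) and $-\log\delta_n\to\infty$, this gives
$$
1 \leq \frac{-\log\delta_{n+1}}{-\log\delta_n} \leq 1+\frac{K}{-\log\delta_n} \to 1.
$$

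To conclude, I would take $\limsup$ and $\liminf$ as $\ep\to 0^+$ in the sandwich. Writing the right-hand term as $\tfrac{\log N(L,\delta_{n+1})}{-\log\delta_{n+1}}\cdot \tfrac{-\log\delta_{n+1}}{-\log\delta_n}$ and using the step above, its $\limsup$ (as $\ep\to 0^+$, equivalently as $n\to\infty$) equals $\limsup_n \tfrac{\log N(L,\delta_{n+1})}{-\log\delta_{n+1}}=\limsup_n \tfrac{\log N(L,\delta_n)}{-\log\delta_n}$ by an index shift; an analogous rewrite handles the left-hand term and the $\liminf$ statement. Since the reverse inequalities $\dimsup_B L\geq \limsup_n \tfrac{\log N(L,\delta_n)}{-\log\delta_n}$ and $\diminf_B L\leq \liminf_n \tfrac{\log N(L,\delta_n)}{-\log\delta_n}$ are immediate by specializing $\ep=\delta_n$, both equalities follow. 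The only mildly delicate point is verifying $(-\log\delta_{n+1})/(-\log\delta_n)\to 1$ under the $\liminf$ hypothesis rather than a full $\lim$: one needs the additive gap in $-\log$ to stay bounded, which is exactly what the hypothesis guarantees. The case in which $L$ fails to be totally bounded is trivial, as then $N(L,\ep)=\infty$ for $\ep$ small and both sides of each equality are $+\infty$.
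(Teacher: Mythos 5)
Your proof is correct, and it is essentially the standard argument the paper itself relies on: the paper gives no proof, simply citing \cite[Chapter~3.1]{Fal}, and Falconer's proof is exactly this sandwich over $\delta_{n+1}\leq \ep<\delta_n$ together with the boundedness of $\log(\delta_n/\delta_{n+1})$ coming from the ratio hypothesis. Your extra care with the $\liminf$ hypothesis (showing $(-\log\delta_{n+1})/(-\log\delta_n)\to 1$) and with the non--totally-bounded case is sound and matches what ``works line by line in general metric spaces'' means here.
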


The following proposition gives the connection between upper box dimension and the asymptotic behaviour of the entropy numbers.

\begin{proposition}\label{prop: dimension} Let $(X,d)$ be a metric space and $L\subset X$ a connected and totally bounded set. Then, $\dimsup_B L<\infty$ if and only if $\limsup\limits_{n\to \infty} e_n(L)^{1/n}<1$.
\end{proposition}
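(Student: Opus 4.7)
The plan is to exploit the near-equivalence between $e_n(L)\le \varepsilon$ and $N(L,\varepsilon)\le 2^{n-1}$ coming directly from the definition $e_n(L)=\E_{2^{n-1}}(L)$: $N(L,\varepsilon)\le 2^{n-1}$ forces $e_n(L)\le \varepsilon$, and conversely $e_n(L)<\varepsilon$ forces $N(L,\varepsilon)\le 2^{n-1}$. Both implications of the proposition should then fall out by substituting appropriate scales of $\varepsilon$ and invoking Proposition~\ref{prop:deltas}.

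For the forward direction I would assume $\dimsup_B L = d<\infty$ and fix $\eta>0$. The definition of the upper box dimension gives $N(L,\varepsilon)\le \varepsilon^{-(d+\eta)}$ for all sufficiently small $\varepsilon>0$. Setting $\varepsilon_n:=2^{-(n-1)/(d+\eta)}$, one has $\varepsilon_n^{-(d+\eta)}=2^{n-1}$, so $N(L,\varepsilon_n)\le 2^{n-1}$ and hence $e_n(L)\le \varepsilon_n$ for $n$ large. Thus
$$
\limsup_{n\to\infty}\, e_n(L)^{1/n}\;\le\;\lim_{n\to\infty} 2^{-(n-1)/(n(d+\eta))}\;=\;2^{-1/(d+\eta)}\;<\;1.
$$

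For the converse, let $\alpha:=\limsup_n e_n(L)^{1/n}<1$ and choose $\beta\in(\alpha,1)$. For all $n$ past some $n_0$, $e_n(L)\le \beta^n<\beta^{n-1}$, and the strict inequality gives $N(L,\beta^{n-1})\le 2^{n-1}$. Applying Proposition~\ref{prop:deltas} to the decreasing sequence $\delta_n:=\beta^{n-1}\to 0$, whose successive ratios equal $\beta>0$, one obtains
$$
\dimsup_B L\;=\;\limsup_{n\to\infty}\,\frac{\log N(L,\delta_n)}{-\log \delta_n}\;\le\;\frac{\log 2}{-\log \beta}\;<\;\infty.
$$

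The only delicate point is the strict versus non-strict inequality in the infimum defining $\E_{2^{n-1}}$, which is why I use $\beta^{n-1}$ rather than $\beta^n$ in the converse and slightly enlarge $\varepsilon_n$ above the threshold in the forward step. I do not see where the connectedness hypothesis enters my argument; it may well be dispensable, and I suspect it is included for harmony with later applications, where $L$ typically arises as the image of a connected set under a continuous map.
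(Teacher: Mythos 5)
Your proof is correct, and its second half takes a genuinely different route from the paper. For the forward implication you argue directly from the definition of $\dimsup_B$ (choosing $\varepsilon_n=2^{-(n-1)/(d+\eta)}$ so that $N(L,\varepsilon_n)\le 2^{n-1}$ forces $e_n(L)\le\varepsilon_n$), whereas the paper quotes a result of Follo giving $e_n(L)\le 2(2^{n-1}+1)^{-1/(d+\eta)}$; the content is the same, but your version is self-contained. The real divergence is in the converse: the paper feeds the sequence $\delta_n=2e_n(L)$ into Proposition~\ref{prop:deltas}, and verifying the ratio condition $\liminf_n \delta_{n+1}/\delta_n>0$ is exactly where connectedness enters, via Lemma~\ref{prop: liminf en} ($\liminf_n e_{n+1}(L)/e_n(L)\ge 1/5$). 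You instead use the geometric scales $\delta_n=\beta^{n-1}$, for which the ratio condition is trivial, and you handle the infimum in the definition of $\E_{2^{n-1}}$ correctly by exploiting the strict inequality $e_n(L)<\beta^{n-1}$ to get $N(L,\beta^{n-1})\le 2^{n-1}$. This confirms your closing suspicion: connectedness is not needed for this proposition, so your argument is both simpler and slightly more general. What the paper's route buys is Lemma~\ref{prop: liminf en} itself, which is not merely an auxiliary step here but is reused later (in the proof of Theorem~\ref{prop: entropy en ellp}), and its method of covering at scale $2e_n(L)$ keeps the estimate tied to the actual entropy numbers rather than to an auxiliary geometric sequence; your method buys the removal of the connectedness hypothesis and avoids the factor-of-$2$ bookkeeping.
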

\begin{proof}
Suppose that $\dimsup_{B}=d<\infty$. By \cite[Corollary~5]{Foll} (which is stated for subsets of $\mathbb R^N$ but holds in general metric spaces), there exists $n_\ep \in \mathbb N$ such that for every $n>n_\ep$ we have $$e_n(L)\leq 2(2^{n-1}+1)^{{-\frac{1}{d+\ep}}}\le 2^{1-\frac{n}{d+\ep}}.$$
This gives

$\limsup\limits_{n\to \infty} e_n(L)^{1/n}\leq 2^{-\frac{1}{d+\ep}}<1$.
	
Conversely, suppose that $\limsup\limits_{n\to \infty} e_n(L)^{1/n}<1$ and take $0<\beta<1$,  $n_0\in  \mathbb N$ such that $e_n(L)< \beta^n$ for all $n\ge n_0$.
	For each $n$, the definiton of  $e_n(L)$ gives us $x_1,x_2,\ldots,x_{2^n-1}\in X$ such that $L\subset\bigcup_{j=1}^{2^{n-1}}B(x_j,2e_n(L))$. Therefore, $N(L,2e_n(L))\leq 2^{n-1}$ and for $n\ge n_0$ we have
		\begin{align*}
		  \frac{\log\left(N(L,2e_n(L))\right)}{-\log(2e_n(L))} & \leq \frac{(n-1)\log 2}{-\log(2e_n(L))}<\frac{(n-1)\log 2}{-\log(2\beta^n)} \\
		   & = \frac{(n-1) \log 2}     {-\log(2) - n \log(\beta)} \underset{n \to \infty}{\longrightarrow} -\frac{\log(2)}{\log(\beta)} <+\infty.
		\end{align*}
In order to conclude that $\dimsup_B L<\infty$, we want to use Proposition \ref{prop:deltas} with $\delta_n=2e_n(L)$, which are clearly decreasing and convergent to zero (since $L$ is totally bounded). The fact that $\liminf\limits_{n\to \infty} \dfrac{\delta_{n+1}}{\delta_n}>0$  follows from  Lemma \ref{prop: liminf en} below.
\end{proof}

\begin{lemma}\label{prop: liminf en} Let $(X,d)$ be a metric space and $L\subset X$ a connected set. Then, $$\liminf_{n\to \infty} \dfrac{e_{n+1}(L)}{e_n(L)}\geq \dfrac 15.$$
\end{lemma}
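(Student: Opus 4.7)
The plan is to prove the pointwise bound $e_n(L)\le 5\,e_{n+1}(L)$ for every $n\ge 1$, from which the $\liminf$ claim follows immediately. Fix $n$ and $\varepsilon>e_{n+1}(L)=\E_{2^n}(L)$, and choose centers $x_1,\dots,x_{2^n}$ in $X$ with $L\subset\bigcup_i B(x_i,\varepsilon)$; after discarding any balls that miss $L$, we keep some $m\le 2^n$ centers with each $B(x_i,\varepsilon)\cap L\ne\emptyset$.

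The idea is to encode the connectedness of $L$ into an auxiliary graph $G$ on these centers, with $x_i\sim x_j$ iff $B(x_i,\varepsilon)\cap B(x_j,\varepsilon)\cap L\ne\emptyset$ (note that in this case $d(x_i,x_j)<2\varepsilon$). The crucial step is to verify that $G$ is connected: any bipartition $V(G)=A\sqcup B$ with no crossing edges would display $L$ as the disjoint union of the two nonempty relatively open subsets $L\cap\bigcup_{i\in A}B(x_i,\varepsilon)$ and $L\cap\bigcup_{j\in B}B(x_j,\varepsilon)$, contradicting the hypothesis on $L$.

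Once $G$ is known to be connected, I would take a spanning tree $T$ of $G$ and use its bipartite $2$-colouring to split $V(T)=V_1\sqcup V_2$ with $|V_1|\le \lceil m/2\rceil\le 2^{n-1}$; since $T$ is connected with $m\ge 2$, every vertex of $V_2$ has a neighbour in $V_1$, so $V_1$ dominates $V(G)$. Then for each $x_i$ there is $x_j\in V_1$ with $d(x_i,x_j)<2\varepsilon$ (taking $x_j=x_i$ when $x_i\in V_1$), so the triangle inequality yields $B(x_i,\varepsilon)\subset B(x_j,5\varepsilon)$. Hence the at most $2^{n-1}$ balls $\{B(x_j,5\varepsilon)\}_{j\in V_1}$ cover $L$, which gives $\E_{2^{n-1}}(L)\le 5\varepsilon$, and letting $\varepsilon\downarrow e_{n+1}(L)$ concludes the argument. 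The only non-routine ingredient is the connectedness of the auxiliary graph $G$ — this is precisely where the hypothesis on $L$ enters — while the tree combinatorics and the triangle-inequality enlargement of the balls are elementary; the degenerate cases $m=1$ and $e_{n+1}(L)=0$ are trivial.
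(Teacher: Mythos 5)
Your proposal is correct and establishes the same pointwise bound $e_n(L)\le 5\,e_{n+1}(L)$ as the paper, but by a genuinely different combinatorial mechanism. The paper uses connectedness of $L$ only \emph{locally}: it shows that every center of the $2^n$-ball cover has a companion center within distance $2r$, and then performs a greedy selection of pairwise $4r$-separated centers, each of which absorbs a disjoint two-element cluster; this caps the number of selected centers at $2^{n-1}$ and the enlarged balls of radius $5r$ cover $L$. You instead use connectedness \emph{globally}, to prove that the intersection graph of the covering balls (after discarding those missing $L$) is connected, and then extract a spanning tree whose smaller colour class is a dominating set of size at most $2^{n-1}$; the triangle inequality finishes the argument. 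Both routes trade a factor $2$ in the number of balls for a constant enlargement of the radius, but your version actually gives the sharper constant $3$ (an edge forces $d(x_i,x_j)<2\varepsilon$, so $B(x_i,\varepsilon)\subset B(x_j,3\varepsilon)$, hence $\liminf_n e_{n+1}(L)/e_n(L)\ge 1/3$), even though you only claim the $5$ needed for the statement. A minor additional merit is that discarding the balls that miss $L$ makes the disconnection step airtight (both relatively open pieces are visibly nonempty), a point the paper's claim treats implicitly; conversely, the paper's greedy packing avoids any graph-theoretic apparatus. Since the lemma is only used in Proposition \ref{prop: dimension} to guarantee some positive lower bound for the ratio, either constant serves equally well.
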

\begin{proof}

Fix $n \in \mathbb N$, take $\delta>0$  and set $r=(1+\delta) e_{n+1}(L)$. There exists a subset $L_0=\{x_1,x_2,\ldots,x_{2^{n}}\}$ of $X$ such that $L\subset \bigcup_{x\in L_0} B_X(x,r)$.

First, we claim that for $x \in L_0$, there exists $\widetilde x \in L_0$, $x\neq \widetilde x$ such that $d(x,\widetilde x)< 2r$. Indeed, suppose that $B_X(x,r)\cap B_X(\widetilde x,r)=\emptyset$ for every $\widetilde x \in L_0\setminus\{x\}$. Now, the open set $V=\bigcup_{\widetilde  x\in L_0\setminus\{ x\}} B_X(\widetilde  x,r)$ satisfies $L\subset V \cup B_X( x,r)$ and $V \cap B_X(x,r)=\emptyset$, which is impossible since $L$ connected.

Now, take $x_{j_1} \in L_0$ and let $M_1=\{ x\in L_0 \colon d(x_{j_1},x)<2r\}$. Note that $M_1$ have at least 2 points of $L_0$: $x_{j_1}$ and the one given by the previous claim. If $L_0\subset B_X(x_{j_1}, 4r)$, then $L\subset B_X(x_{j_1},5r)$ and $5r\geq \E_1(L)\geq e_n(L)$. Thus
$$
\dfrac{e_{n+1}(L)}{e_{n}(L)}\geq \dfrac{e_{n+1}(L)}{5r}=\dfrac{e_{n+1}(L)}{5e_{n+1}(L)(1+\delta)}=\dfrac{1}{5(1+\delta)}
$$ and we are done, since $\delta$ is arbitrary.
If, on the contrary, there exists $x_{j_2} \in L_0$ such that $d(x_{j_2},x_{j_1})\geq 4r$, we take $M_2=\{x\in L_0 \colon d(x_{j_2},x)<2r\}$. By the claim above, $M_2$ has at least 2 points and, clearly,  $M_2\cap M_1=\emptyset$. Now, if $L_0\subset B_X(x_{j_1},4r) \cup B_X(x_{j_2},4r)$, then $L\subset B_X(x_{j_1},5r) \cup B_X(x_{j_2},5r)$. This implies that $5r\geq\E_2(L)\geq e_n(L)$, which gives  $$\dfrac{e_{n+1}(L)}{e_{n}(L)}>\dfrac{1}{5(1+\delta)} $$ and we are done.  If not, there exists $x_{j_3} \in L_0$ such that $x_{j_3}\notin B_X(x_{j_1},4r) \cup B_X(x_{j_2},4r)$ and we take $M_3=\{x \in L_0 \colon d(x_{j_3},x)<2r\}.$ Again by the claim, $M_3$ have at least 2 points and, also, $M_1\cap M_3=\emptyset$ and $M_2\cap M_3 =\emptyset$. Then, if $L_0\subset \bigcup_{i=1,2,3} B_X(x_{j_i},4r)$, reasoning as before we obtain that $$\dfrac{e_{n+1}(L)}{e_{n}(L)}\geq \dfrac{1}{5(1+\delta)}.$$ If not, we continue with this procedure and, since $L_0$ have $2^n$ points, this procedure comes to an end. So for some $m\in \mathbb N$  we get subsets $M_i\subset L_0 $ and points  $x_{j_i} \in M_i$,  $1\leq i \leq m $,  such that $L_0\subset \bigcup_{i=1}^{m} B_X(x_{j_i},5r)$, which implies that that $5r\geq \E_m(L)$.  Since the sets $M_i$ are disjoint and each has at least 2 points, then $m\leq \frac{2^n}{2}=2^{n-1}$. Thus {$5r\geq \E_{m}(L)\geq e_n(L)$ and then $$\dfrac{e_{n+1}(L)}{e_{n}(L)}\geq\dfrac{1}{5(1+\delta)},$$}
which give the desired result.
\end{proof}

We finish this section with an example that will be used later. This example shows  a connected set $K$ in a Banach space $E$ for which the sequence $(e_n(K))_n$ belongs to $\ell_1$ while  $\dim_BK=\infty$ which, thanks to the above proposition, is equivalent to $\limsup_{n\to \infty}e_n(K)^{1/n}=1$.

\begin{example}\label{ejemplo}
For $0<\varepsilon<1$, consider the set $K=\{(x_n)_n \subset \mathbb C\colon |x_n|\leq\varepsilon^n\}\subset c_0$. Then $\diminf_{B}K=\infty$ and  $(e_n(K))_n \in \ell_1$.
\end{example}

\begin{proof}
Fix $N \in \mathbb N$ and denote by $\Pi_N\colon c_0\rightarrow c_0$ the projection onto the first coordinates. Let $K^N=\Pi_N(K)$. Note that $K^N\subset K$ and that for any $(x_n)_n\in K$,  $\|\Pi_N ((x_n)_n) -(x_n)_n\|\leq \ep^N$. Then we have the inequalities
\begin{equation}\label{eq1}
e_n(K^N)\leq e_n(K)\leq e_n(K^N)+\ep^N \quad \mbox{for all } n \in \mathbb N\quad
\end{equation}

We define the diagonal operator $D_N\colon c_0\rightarrow c_0$  by $D((x_n)_n)=(x_1 \ep, x_2 \ep^2,\ldots, x_N \ep^N,0,\ldots)$.  Since $D_N(B_{c_0})=K^N$, we can apply \cite[Proposition~1.3.2]{CaSt} to estimate the entropy numbers of $K^N$ as
\begin{equation}\label{eq12}
	\sup_{1\leq k \leq N} 2^{-\frac{(n-1)}{2k}} \ep^{\frac{k+1}{2}}\leq e_n(K^N)\leq 6 \sup_{1\leq k < N} 2^{-\frac{(n-1)}{2k}} \ep^{\frac{k+1}{2}}.
\end{equation}
Combining \eqref{eq1} and \eqref{eq12}, we obtain
$$
\sup_{1\leq k \leq N} 2^{-\frac{(n-1)}{2k}} \ep^{\frac{k+1}{2}}\leq e_n(K)\leq 6 \sup_{1\leq k < N} 2^{-\frac{(n-1)}{2k}} \ep^{\frac{k+1}{2}} +\ep^N.
$$
The above inequality holds for every $N\in \mathbb N$, so writing $s=\min\{\varepsilon, \frac{1}{2}\}$ and $S=\max\{\varepsilon, \frac{1}{2}\}$, and using simple calculations, we find positive constants $C_1$ and $C_2$ such that
$$
C_1 s^{\sqrt{n-1}}\leq e_n(D)\leq C_2 S^{\sqrt{n-1}}.
$$
Thus, $(e_n(D))_n \in \ell_1$.

To see that $\dim_BK=\infty$,  fix again $N\in \mathbb N$ and define $T_N\colon c_0\rightarrow \mathbb C^N$ by $T_N((x_n)_n)=(x_1,x_2,\ldots,x_N)$. Since $T_N(K)\subset \mathbb C^N $ has non-empty interior, we obtain that
$2N=\diminf_B(T_N(K))\leq \diminf_B(K)$. Since $N$ was arbitrary, the result follows.
\end{proof}


\section{On the dimension of the image}\label{Sec: Dimension y entropy numbers}

We begin this section with the following simple observation: if  $E$ and $F$  are normed spaces,   for a linear operator $T\colon E\to F$ there is clear relationship between the box dimension of $T(B_E)$ and its linear dimension.  Indeed, if $\dim T(E)=N$ is finite, then $T(E)$ is an isomorphic copy of  $\mathbb C^N$ and $T(B_E)$ corresponds (via such isomporphisim) to an open subset of $\mathbb C^N$. Then,  we have $\dim_B(T(B_E)) =2N$ (which means, in particular, that $T(B_E)$ has finite box dimension). On the other hand, if $T(E)$ has infinite linear dimension, we can take $\{x_n\}_n$ a sequence of linearly independent elements in $T(B_E)$. Now, for each $N$, $\Span\{x_1,\dots,x_N\} \cap T(B_E)$ is homeomorphic (via the restriction of a linear isomorphism) to an open subset of $\mathbb C^N$. Since bi-Lipschitz mappings preserve box dimension, we have $$N=\dim_B(\Span\{x_1,\dots,x_N\} \cap T(B_E)) \le \dim_B ( T(B_E)))$$ for all $N$. Therefore, the box dimension of  $T(B_E)$ is also infinite. In particular, we have the following.
\begin{remark}\label{rem: lineal}
  If  $E$ and $F$  are normed spaces and  $T\colon E\to F$ is a linear operator, then $T(B_E)$ has infinite (linear) dimension if and only if it has infinite box dimension.
\end{remark}
The aim of this section is to study possible analogous results for homogeneous polynomials and holomorphic functions between normed spaces.  In the polynomial case, we obtain a result analogous to Remark \ref{rem: lineal}, but the proof is much more involved. Our result in this direction is the following.

\begin{theorem}\label{thm: BoxdimensionPoly}
Let $P\colon E\to F$ be a homogeneous polynomial. Then,  $\dim P(B_E)=\infty$ if and only if  $\diminf_B P(B_E)=\infty$.
\end{theorem}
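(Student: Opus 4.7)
The easy direction is immediate: if $\dim P(B_E)=K<\infty$, then $P(B_E)$ is a bounded subset of the $K$-dimensional complex subspace $\Span P(B_E)\subset F$, hence $\dimsup_B P(B_E)\le 2K$ and so $\diminf_B P(B_E)<\infty$.

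For the converse, assume $\dim P(B_E)=\infty$. My plan is to show, for every $N\in\mathbb{N}$, that $P(B_E)$ contains an open piece of a complex $N$-dimensional submanifold of $F$, which automatically has box dimension $2N$; monotonicity of $\diminf_B$ under inclusion then gives $\diminf_B P(B_E)\ge 2N$, and the result follows by letting $N\to\infty$. To produce such a piece I would locate a point $x_0\in B_E$ and vectors $y_1,\ldots,y_N\in E$ such that the $N$ vectors $DP(x_0)(y_1),\ldots,DP(x_0)(y_N)\in F$ are linearly independent, where $DP$ denotes the Fr\'echet derivative of $P$ (so $DP(x)(h)=m\,\overset\vee{P}(h,x^{m-1})$). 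Once these are in hand, for $\varepsilon>0$ small enough that $x_0+\varepsilon\sum_i\alpha_i y_i\in B_E$ for all $\alpha$ in a neighborhood of $0\in\mathbb{C}^N$, the holomorphic map
$$\Phi(\alpha_1,\ldots,\alpha_N)=P\Bigl(x_0+\varepsilon\sum_{i=1}^N\alpha_i y_i\Bigr)$$
satisfies $D\Phi(0)(e_k)=\varepsilon\,DP(x_0)(y_k)$ and so has injective differential at the origin. Composing $\Phi$ with a bounded linear projection of $F$ onto the $N$-dimensional subspace $\Span\{DP(x_0)(y_k)\}_k$, the finite-dimensional complex inverse function theorem shows that $\Phi$ is a biholomorphism from a neighborhood of $0$ onto an open piece of a complex $N$-dimensional submanifold of $P(B_E)$, whose box dimension equals $2N$ by bi-Lipschitz invariance.

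The task thus reduces to finding, for each $N$, some $x_0\in B_E$ with $\operatorname{rank}(DP(x_0))\ge N$. I would prove the contrapositive: if $\operatorname{rank}(DP(x))\le D$ for every $x\in E$ and some fixed $D<\infty$, then $\dim P(E)<\infty$. The crucial observation is that this hypothesis passes to every finite-dimensional restriction: for any finite-dimensional $W\subset E$, $\operatorname{rank}(D(P|_W)(x))\le\operatorname{rank}(DP(x))\le D$ for every $x\in W$. A purely finite-dimensional statement of the form ``an $m$-homogeneous polynomial between finite-dimensional spaces with derivative of rank at most $D$ everywhere has image spanning at most $f(D,m)$ dimensions'' would then suffice: if $\dim P(E)>f(D,m)$, one could pick $z_1,\ldots,z_{f(D,m)+1}\in B_E$ with $P(z_1),\ldots,P(z_{f(D,m)+1})$ linearly independent, and the restriction $P|_W$ with $W=\Span\{z_1,\ldots,z_{f(D,m)+1}\}$ would violate the finite-dimensional bound.

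The main obstacle is the finite-dimensional rank-to-span inequality, which must be independent of $\dim W$. One natural route is algebro-geometric: the image of a polynomial map $\mathbb{C}^n\to\mathbb{C}^M$ of degree $m$ with $\operatorname{rank}(DP)\le D$ is a constructible set of complex dimension $\le D$, lying in an irreducible variety whose degree is controlled by $m$ and $D$ (via B\'ezout-type estimates). Since a complex algebraic variety of degree $d$ and dimension $k$ in its span satisfies $\dim\Span\le d+k-1$, this yields a bound $f(D,m)$ independent of $n$. Alternatively, for $m=2$ one can argue directly: the map $x\mapsto\overset\vee{P}(x,\cdot)$ is linear from $E$ into $\operatorname{Lin}(E,F)$ with image consisting of rank-$\le D$ operators, so classical Flanders-type results on such subspaces combined with the symmetry of $\overset\vee{P}$ produce the desired bound on $\dim\Span\overset\vee{P}(E\times E)=\dim P(E)$. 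The higher-degree case would follow by induction on $m$ applied to $x\mapsto\overset\vee{P}(\,\cdot\,,x^{m-1})$.
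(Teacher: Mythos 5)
Your proposal is correct in its overall architecture, and it reaches the crux by a genuinely different route than the paper. The easy direction, the reduction to finite-dimensional restrictions, and the final step (a point $x_0$ with $\operatorname{rank}\,DP(x_0)\ge N$ forces, via a bounded projection and the inverse function theorem, an open subset of $\C^N$ inside a Lipschitz image of $P(B_E)$, hence $\diminf_B P(B_E)\ge 2N$) are all fine; that last step is essentially what the paper does implicitly when passing from the rank of the Jacobian of $\bff$ to the box dimension of $\bff(B_{\ell_1^N})$. Where you diverge is the finite-dimensional inequality. The paper fixes $N$ linearly independent values $P(x_1),\dots,P(x_N)$, projects and expands by the multinomial formula to obtain $N$ linearly independent $m$-homogeneous polynomials on $\C^N$, and then proves a self-contained linear-algebraic bound (Theorem \ref{mt}): every $m$-homogeneous polynomial not increasing the Jacobian rank beyond $r$ solves an explicit linear PDE system whose solution space has dimension at most $\binom{r+m-1}{m-1}$, which yields generic rank at least $N^{1/m}-m$ (Corollary \ref{coro:dimension}). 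You instead bound the span of the image of a map with $\operatorname{rank}\,DP\le D$ everywhere by algebraic geometry: the closure of the image is irreducible of dimension at most $D$ (generic rank equals dimension of the image), its degree is at most $m^{D}$ (restrict to a generic $D$-dimensional subspace of the source and apply a B\'ezout count), homogeneity makes it a cone so one may projectivize, and the classical inequality $\deg\ge\operatorname{codim}+1$ for irreducible nondegenerate varieties gives a span bound of order $m^{D}+D$, independent of the domain dimension. This route is workable, but note two trade-offs: quantitatively it is much weaker (rank roughly $\log_m N$ versus the paper's $N^{1/m}-m$, though either suffices for the qualitative statement), and it outsources the key step to nontrivial classical facts that you only sketch, whereas the paper's Theorem \ref{mt} is elementary and self-contained. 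Your second alternative for the crux, Flanders-type theorems for $m=2$ together with an unspecified induction on $m$ via $x\mapsto\overset{\vee}{P}(\,\cdot\,,x^{m-1})$, is not worked out and should not be relied on as stated; the algebro-geometric route is the one that can be completed.
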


We devote Section \ref{sec:pol} to the proof of this  theorem, which involves some results on polynomials of several complex variables that we think are interesting in their own.  Regarding holomorphic mappings, the situation is completely different: Example \ref{Example: holomorphic function} below shows that the box dimension of the image of $f$ may be finite while its linear dimension is infinite. Let $f\colon U\rightarrow F$ where $U\subset E$ is an open set. It is clear that, if $F$ is finite dimensional, then  $f(U)$ has finite both linear and box dimensions. If $E$ is finite dimensional, things do not go so smoothly (see again  Example \ref{Example: holomorphic function}). We start with the following simple positive result.

\begin{remark}\label{prop:domfindim} Let $E$ and $F$ be Banach spaces, $E$ finite dimensional, and $U\subset E$ be an open subset. If $f:U\to F$ is holomorphic and $K\subset U$ is compact, then $\dimsup_B f(K)$ is finite.
\end{remark}
\begin{proof}
By standard compactness arguments, it is enough to show the result for $K$ a closed ball (in the finite dimensional Banach space $E$). Since $f$ is holomorphic, it is continuously (Fr\'echet) differentiable. If $Df(z)$ denotes the differential of $f$ at $z$, let $M$ be the maximum of $\|Df(z)\|$ for $z\in K$. By \cite[Theorem 13.8]{Mu} we have
{
	$$\|f(x)-f(y)\|\leq \|x-y\|\cdot\sup_{0\leq t \leq 1} \|Df( x+t(y-x)\|\leq K \|x-y\|.$$}
Therefore, $f$ is Lipschitz on $K$ and then $\dimsup_B(f(K))\le \dimsup_B(K)<+\infty$.
\end{proof}

The following examples show, on the one hand, that an analogue to Theorem \ref{thm: BoxdimensionPoly} does not hold for holomorphic mappings. Also, we see in Example \ref{Example: holomorphic function}  that the restriction to a compact subset of $U$ is necessary in the previous proposition.

\begin{example}\label{Example: holomorphic function}
For $1\leq p \le \infty$, let $f\colon \Delta\rightarrow \ell_p$  ($c_0$ if $p=\infty$) be given by $f(z)=(z,z^2,z^3,\ldots)$. Then $f(\Delta)$ is not a relative compact set and, in particular, $\diminf_{B}f(\Delta)=\infty$.  Also, for any  $0<r<1$,  the linear dimension of  $ f(r \Delta)$ is infinite while its box dimension is finite. In other words, $f(r \Delta)$ has finite box dimensional but it is not contained in any finite dimensional subspace of $\ell_p$.
\end{example}
\begin{proof}
Take the sequence $(z_n)_n\subset \Delta$ given by $z_n=(\frac{1}{2})^{\frac{1}{2^{n-1}}}$. Note that, if $n<m$, then $$\|f(z_n)-f(z_m)\|_{\ell_p}\geq |\left((f(z_n)-f(z_m)\right)_{2^{m-1}}|=\big|\big(\frac{1}{2}\big)^{\frac{2^{m-1}}{2^{n-1}}} - \frac{1}{2}\big|\geq \big|\frac{1}{4} - \frac{1}{2}\big|=\frac{1}{2}.$$ This shows that  the sequence $(f(z_n))_n\subset f(\Delta)$ is  uniformly separated, and hence $f(\Delta)$ cannot be totally bounded.

By Remark \ref{prop:domfindim} we know that for $0<r<1$ we have $\dimsup_B(f(r\Delta))<\infty$. To see that  $f(r \Delta)$  has infinite linear dimension, fix $\delta< r$ and define $w_n=\frac{\delta}{n}$ for $n\in \mathbb N$. Let us see that $\{(f(w_n))_n\}$ is a linearly independent set. If not, for some $N\in \mathbb N$ and some not all zero scalars $a_1,\ldots, a_N$ we must have $\sum_{n=1}^N a_n f(z_n)=0,$ and so
$$
\left(\begin{array}{llll}
\delta & \ \, \frac{\delta}2 & \cdots & \ \, \frac{\delta}{N}\\
\delta^2 & \left(\frac{\delta}{2}\right)^2& \cdots & \left(\frac{\delta}{N}\right)^2\\
\  \vdots&\ \ \vdots&\ \, \vdots&\ \ \vdots\\
\delta^{N} & \left(\frac{\delta}{2}\right)^{N}& \cdots & \left(\frac{\delta}{N}\right)^{N}
\end{array}\right) \left(\begin{array}{c} a_1\\
a_2\\
\vdots\\
a_N\end{array}\right) = \left(\begin{array}{c} 0\\
0\\
\vdots\\
0\end{array}\right).
$$
This is impossible since the (Vandermonde) determinant of the matrix is nonzero.
\end{proof}

With the same ideas we can prove the following.

\begin{example}\label{Example: extension} Consider entire function $f:\mathbb C\to \ell_p$ given by $f(z)=(z,\frac{z^2}{2!},\frac{z^3}{3!},\ldots)$. Then, for every $r>0$ the linear dimension of  $ f(r \Delta)$ is infinite while its box dimension is finite.
\end{example}


\section{The proof of Theorem \ref{thm: BoxdimensionPoly}}\label{sec:pol}

Let $P:E\to F$ be a $m$-homogenoeous polynomial.
If $\dim\ P(B_E)<\infty$, then it is clear that  $\diminf_B P(B_E)<\infty$, since it is a subset of a finite dimensional vector space.

Suppose now that   $\dim\ P(B_E)=\infty$. Given $N\in \mathbb N$ we can take $x_1,\ldots,x_N\in B_E$ such that $\{P(x_1),\ldots,P(x_N)\}$ are lineary independent.
We choose a bounded linear projection $R\colon F\rightarrow \Span\{P(x_1),\ldots,{P(x_N)}\}$.  The multinomial formula gives us
\begin{equation}\label{eq2}
\begin{array}{rl}
{R\circ P\left(\sum_{i=1}^Na_ix_i\right)}=&R\Big(\displaystyle \sum_{k_1+\ldots+k_N=n} \dfrac{n!}{k_1\ldots k_N!} \prod_{1\leq j \leq N} a_j^{k_j} \overset \vee P(x_1^{k_1},\ldots,x_N^{k_N})\Big)\\
=&p_1(a_1,\ldots,a_n)P(x_1)+\ldots+p_N(a_1,\ldots,a_N)P(x_N)
\end{array}
\end{equation}
where, for each $1\leq j\leq N$, $p_j:\mathbb C^N\to \mathbb C$ is an $m$-homogeneous polynomial. Note that $p_j(\epsilon_j)=1$ and $p_j(\epsilon_i)=0$ if $i\neq j$. This means that our polynomials $p_1,\dots,p_N$ are linearly independent.
We define $\bff: \mathbb C^N \to \mathbb C^N$  by $$\bff(z_1,\ldots,z_N)=(p_1(z_1,\ldots,z_N),\ldots, p_N(z_1,\ldots,z_N))$$ and  $T\colon \mathbb C^N\rightarrow \Span\{P(x_1),\ldots P(x_N)\}$  by $$T(z_1,\ldots,z_N)=\sum_{j=1}^Nz_j P(x_j),$$
which is a linear isomorphism.

Since  $ \{P(a_1x_1+\ldots+a_Nx_N)\colon \sum_{j=1}^{N}|a_n|\leq 1\} \subset P(B_E),$ we have

$$
\dim_B(P(B_E))\geq \dim_B\left\{R\circ P(a_1x_1+\ldots+a_Nx_N)\colon \sum_{j=1}^{N}|a_j|\leq 1\right\}=\dim_B(\bff(B_{\ell_1^N})).
$$
The proof of Theorem \ref{thm: BoxdimensionPoly} is finished once we show that $\dim_B(\bff(B_{\ell_1^N}))$ goes to infinity as $N$ does. This is shown in Corollary \ref{coro:dimension} below,  where we see that $\dim_B(\bff(B_{\ell_1^N}))$ is at least $N^{\frac{1}{m}}-m$. In order to get Corollary \ref{coro:dimension} we need some preparation, which we develop in the following subsection.


\subsection{On the rank of the jacobian of homogeneous polynomials}\label{subsec}

Consider $p_1,\ldots, p_r$ $m$-homogeneous polynomials from $\mathbb{C}^N$ to $\mathbb{C}$ such that the $r\times N$ jacobian matrix $J_{p_1,\ldots, p_N}:=\left(\frac{\partial p_i}{\partial z_j}\right)_{1\leq i\leq r, 1\leq j\leq N}$ has maximal rank. Assume w.l.o.g. that the first principal minor $$Q\colon=Q(z_1,\ldots, z_N)=\det\left(\frac{\partial p_i}{\partial z_j}(z_1,\ldots,z_N)\right)_{1\leq i,j\leq r}$$
is not zero. For $i=1,\ldots, r,\, j=r+1,\ldots, N$, denote with $Q_{ij}=Q_{ij}(z_1,\ldots, z_N)$ the determinant of the submatrix of the jacobian matrix whose columns are indexed by $\{1,\ldots, r\}\cup\{j\}\setminus\{i\},$ multiplied by $(-1)^i.$
All these polynomials $Q$ and $Q_{ij}$ are homogeneous of degree $r(m-1)$.

Is easy to check that any other $m$-homogeneous polynomial $P$ such that the rank of the jacobian matrix of $p_1,\ldots, p_r, P$ is equal to $r$ must be a solution of the following linear system of partial  differential equations:
 \begin{equation}\label{ozu}
 Q\,\frac{\partial P}{\partial z_j}+\sum_{i=1}^r Q_{ij}\,\frac{\partial P}{\partial z_i}=0, \   j=r+1,\ldots, N.
\end{equation}
We will  exhibit a bound on the dimension of the $\C$-vector space of homogeneous polynomials of degree $m$ satisfying \eqref{ozu} which does not depend on $N$.
\begin{theorem}\label{mt}
The dimension of the $\C$-vector space of all $m$-homogeneous polynomials from $\mathbb C^N$ to $\mathbb C$  satisfying \eqref{ozu} is bounded from above by ${r+m-2 \choose m-2}$.
\end{theorem}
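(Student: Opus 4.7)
The plan is to construct an injective linear map from the solution space $V$ of \eqref{ozu} into a finite-dimensional vector space whose dimension depends only on $r$ and $m$. The starting point is to exploit the $m$-homogeneity of $P$ in order to absorb the ``transverse'' derivatives. Multiplying the $j$-th equation of \eqref{ozu} by $z_j$, summing over $j=r+1,\dots,N$, and invoking Euler's identity $\sum_{j=1}^{N} z_j\,\partial_{z_j} P = m\, P$, one obtains
$$
m\,Q\,P \ =\ \sum_{i=1}^{r}\bigl(Q\, z_i - \widetilde{Q}_i\bigr)\,\partial_{z_i} P, \qquad \widetilde Q_i:=\sum_{j=r+1}^{N} z_j\, Q_{ij}.
$$
Since $\mathbb{C}[z_1,\dots,z_N]$ is a domain and $Q\not\equiv 0$, this identity recovers $P$ uniquely from the tuple $(\partial_{z_1}P,\dots,\partial_{z_r}P)$. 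Hence the assignment $\Phi\colon P\mapsto (\partial_{z_1}P,\dots,\partial_{z_r}P)$ is injective on $V$.

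The next step is to show that each partial derivative $\partial_{z_i}P$ is itself constrained by a system analogous to \eqref{ozu}, obtained by applying $\partial_{z_k}$ (for $1\leq k\leq r$) to \eqref{ozu} and using the relation above to absorb the lower-order terms that arise. Since $\partial_{z_i}P$ is $(m-1)$-homogeneous, the same Euler-based argument applies, producing a further reduction from first to mixed second partial derivatives $\partial_{z_i}\partial_{z_k}P$, which only involve the first $r$ coordinates. Iterating this procedure $m-2$ times, $P$ is faithfully encoded by scalar ``leaf data'' indexed by multi-indices $\alpha\in\mathbb{Z}_{\geq 0}^{r}$ with $|\alpha|\leq m-2$; the number of such indices is the dimension of the space of polynomials in $r$ variables of degree at most $m-2$, namely $\binom{r+m-2}{m-2}$.

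The main technical hurdle is the iteration step. The PDE satisfied by $\partial_{z_i}P$ is not literally \eqref{ozu}, but a variant whose coefficients involve derivatives of $Q$ and of the minors $Q_{ij}$. The hard part will be to verify that, after the Euler-trick is applied to this modified system, a structural identity of the same shape re-emerges (with $m$ replaced by $m-1$ and the minors appropriately updated), so that the induction closes without introducing additional free parameters at any stage. It is precisely this combinatorial bookkeeping that is expected to pin the bound down to exactly $\binom{r+m-2}{m-2}$, rather than to the looser estimate that would follow from naive degree counting alone.
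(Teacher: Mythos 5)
Your Euler-identity step is correct as far as it goes: multiplying the $j$-th equation of \eqref{ozu} by $z_j$, summing, and using homogeneity does give $m\,Q\,P=\sum_{i=1}^r\bigl(Q\,z_i-\widetilde Q_i\bigr)\,\partial_{z_i}P$, so $P\mapsto(\partial_{z_1}P,\dots,\partial_{z_r}P)$ is injective on the solution space. But by itself this proves nothing: the target is the space of $(m-1)$-homogeneous polynomials in all $N$ variables (taken $r$ times), whose dimension grows with $N$, so the whole content of the theorem has to come from the iteration step --- and that step is only announced, not proved. Concretely, applying $\partial_{z_k}$ ($k\le r$) to \eqref{ozu} gives
\[
Q\,\partial_{z_j}\bigl(\partial_{z_k}P\bigr)+\sum_{i=1}^r Q_{ij}\,\partial_{z_i}\bigl(\partial_{z_k}P\bigr)
=-\Bigl(\partial_{z_k}Q\cdot\partial_{z_j}P+\sum_{i=1}^r \partial_{z_k}Q_{ij}\cdot\partial_{z_i}P\Bigr),
\]
so $\partial_{z_k}P$ satisfies an \emph{inhomogeneous} system coupled back to the first-order data of $P$, not a system of the same shape with updated minors. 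You offer no mechanism for absorbing the right-hand side, and this is precisely where the difficulty sits; without it the induction does not close and no bound independent of $N$ follows.

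The final bookkeeping is also not coherent. After $m-2$ differentiations the quantities $\partial^{\alpha}P$ with $|\alpha|=m-2$ are quadratic forms in $N$ variables, not scalars, so ``scalar leaf data indexed by $|\alpha|\le m-2$'' does not describe anything your construction produces; and the paper's own sharpness example $p_i=z_i^m$ (for which the solution space of \eqref{ozu} is the full space of $m$-homogeneous polynomials in $z_1,\dots,z_r$) already has dimension equal to the number of degree-$m$ monomials in $r$ variables, which exceeds $\binom{r+m-2}{m-2}$ as soon as, say, $r=m=2$. So no argument can deliver the count you aim for; the bound should be read, as in the paper's proof and as used in Corollary \ref{coro:dimension}, as the number of degree-$m$ monomials in $r$ variables, which is all the application requires. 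For contrast, the paper argues at the level of coefficients: after a linear change of variables making $z_1^{r(m-1)}$ appear in $Q$, every coefficient $P_{\tilde\gamma}$ whose monomial involves some $z_j$ with $j>r$ is expressed through one equation of the coefficient system \eqref{oxis} in terms of lexicographically smaller coefficients, and this triangularity bounds the corank by the number of monomials in $z_1,\dots,z_r$ alone. To salvage your differential approach you would have to actually prove the structural recursion you conjecture; as written, the key step is missing.
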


\begin{proof}
As $Q\neq0,$ after a linear change of variables in $\mathbb C^N$ if necessary, we may assume that the monomial $z_1^{r(m-1)}$ appears in the Taylor expansion of $Q$. For $\alpha=(\alpha_1,\ldots, \alpha_N)\in(\Z_{\geq0})^N,$ we denote with $z^\alpha$ the product $z_1^{\alpha_1}\ldots z_N^{\alpha_N},$ and we set $|\alpha|=\alpha_1+\cdots +\alpha_N.$
We can then write
$$\begin{array}{ccl}
Q&=& \sum_{|\alpha|=r(m-1)}Q_\alpha z^\alpha\\
Q_{ij}&=& \sum_{|\beta|=r(m-1)}Q_{ij\beta} z^\beta\\
P&=&\sum_{|\gamma|=m} P_\gamma z^\gamma.
\end{array}
$$
 Let $\epsilon_1,\ldots, \epsilon_n$ be the elements of the standard basis of $\Z^n.$ By setting to zero all the coefficients in the polynomial  \eqref{ozu}, we get the following linear system of equations for $(P_\gamma)_{|\gamma|=m}:$
 \begin{equation}\label{oxis}
 \sum_{\alpha+\gamma=\delta+\epsilon_j} \gamma_j\,Q_{\alpha}\,P_\gamma +\sum_{i=1}^r \sum_{\beta+\gamma=\delta+\epsilon_i} \gamma_i\,Q_{ij\beta}\,P_\gamma=0;  \ \ \ j=r+1,\ldots, n,\ |\delta|=r(m-1)+m-1.
 \end{equation}
 This is a homogeneous system of  $n+r(m-1)+m-2\choose r(m-1)+m-1$ linear equations in the ${n+m-1\choose m}$ variables $(P_\gamma)_{|\gamma|=m}.$ We will show that the corank of this system is bounded from above by the number of monomials of degree $m$ in $r$ variables, which is equal to ${r+m-1\choose m-1}$.

 To do so, we will show that  for any $\tilde\gamma\in(\Z_{\geq0})^N$ such that $|\tilde\gamma|=m,$ and $\tilde\gamma_j>0$ for some $j>r,$ there is an equation from \eqref{oxis} from where we can express $P_{\tilde\gamma}$ as a function of all those $P_{\gamma}$ with $z^{\gamma}\prec z^{\tilde\gamma}$ in the standard lexicographic order $z_1\prec x_2\prec\ldots\prec z_N.$ This will amount to a triangular submatrix of the linear system \eqref{oxis} with the desired corank.

 Write then $\tilde\gamma=(\tilde\gamma_1,\ldots, \tilde\gamma_j,0,\ldots,0),$ with $\tilde\gamma_j\neq0,\, j>r,$ and set
 $$\tilde\delta:=r(m-1)+\tilde\gamma_1,\tilde\gamma_2,\ldots, \tilde\gamma_{j-1},\tilde\gamma_j-1,0,\ldots,0).$$
 We extract the equation corresponding to $\delta=\tilde\delta$ in \eqref{oxis}, and find that
 \begin{itemize}
 \item $P_{\tilde\gamma}$ appears in a nonzero term in the first sum (corresponding to $\tilde\alpha=(r(m-1),0,\ldots, 0),$ recall that by hypothesis we have that $Q_{\tilde\alpha}\neq0$ and also $\tilde\gamma_j\neq0);$
 \item all the $P_\gamma$ appearing in the second (double) sum have $\gamma_\ell=0$ if $\ell>j$ and $\gamma_j\leq\tilde\gamma_j-1,$ so we have that $z^\gamma\prec z^{\tilde\gamma};$
 \item the rest of the $P_\gamma$ appearing in the first sum must satisfy
 $$\begin{array}{ccl}
 \gamma_1&\leq &r(m-1)+\tilde\gamma_1\\
 \gamma_2&\leq& \tilde\gamma_2\\
 \vdots&&\vdots\\
 \gamma_j&\leq &\tilde\gamma_j\\
 \gamma_\ell&=&0 \ \forall \ell>j.
 \end{array}
 $$
 As $\gamma\neq\tilde\gamma$ then  there exists a unique $i>1$ such that $\gamma_i<\tilde\gamma_i$ and $\gamma_\ell=\tilde\gamma_\ell$ for $\ell>i.$ This implies that
 $z^\gamma\prec z^{\tilde\gamma}$ also in this case,  which concludes with the proof of the claim.
 \end{itemize}
 \end{proof}

 \begin{remark}
 The bound is sharp as it can be seen easily by choosing as $P_i=z_i^m$ for $i=1,\ldots, r.$ Then, the $\C$-vector subspace of all the polynomials of degree $m$ in the variables $z_1,\ldots, z_r$ satisfy \eqref{ozu}.  On the other hand, it is a classical result (see \cite{sylv53}) that one can make a linear change of variables such that the polynomial system depends polynomially on  $r<N$ variables if and only if the equations \eqref{ozu} can be defined with $Q,\,Q_{ij}\in\C.$  \end{remark}

 \begin{corollary}\label{coro:dimension}
 Let $p_1,\ldots, p_N$ $m$-homogeneous polynomials from $\mathbb C^N$ to $\mathbb C$ which are linearly independent. Then the rank of the jacobian matrix of this family (i.e. the dimension of the image of the  map ${\bff}:\C^N \rightarrow\C^N$ defined by these polynomials)  is at least $N^{\frac1m}-m.$
 \end{corollary}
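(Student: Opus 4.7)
The plan is to invoke Theorem~\ref{mt} with the whole family $p_1,\ldots,p_N$. Let $r$ denote the (generic) rank of the Jacobian matrix of $p_1,\ldots,p_N$; if $r=N$ the inequality $r\ge N^{1/m}-m$ is immediate, so we may assume $r<N$. After a reordering of the polynomials (which affects neither their linear independence nor the rank of their Jacobian), we can suppose that the Jacobian of $p_1,\ldots,p_r$ already has rank $r$, and pick a nonzero $r\times r$ principal minor $Q$ with which to form the system \eqref{ozu}.

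The crucial observation is that \emph{every} $p_j$, for $j=1,\ldots,N$, is a solution of \eqref{ozu}. Indeed, for $j>r$ the augmented Jacobian of $p_1,\ldots,p_r,p_j$ has rank $r$ because the global Jacobian of $p_1,\ldots,p_N$ already does; and for $j\le r$ that augmented matrix contains a repeated row, so its rank is trivially $\le r$. In both cases the derivation preceding \eqref{ozu} applies, and hence $p_j$ satisfies \eqref{ozu}. Since $p_1,\ldots,p_N$ are linearly independent elements of the solution space of \eqref{ozu}, Theorem~\ref{mt} yields
\[
N \;\le\; \binom{r+m-2}{m-2}.
\]
A routine estimate $\binom{r+m-2}{m-2}\le \binom{r+m}{m}\le (r+m)^m$ then gives $N\le (r+m)^m$, and extracting the $m$-th root produces $r\ge N^{1/m}-m$.

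The main obstacle I expect is the verification, in the second paragraph, that the first $r$ polynomials themselves satisfy \eqref{ozu}: one must check that the reasoning preceding Theorem~\ref{mt} uses only the inequality $\mathrm{rank}\le r$ on the augmented Jacobian, not a strict equality. Granted this, everything else is a direct application of Theorem~\ref{mt} together with the elementary binomial estimate above, and the additive correction $-m$ appears only because the bound $(r+m)^m$ is crude.
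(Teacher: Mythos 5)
Your proposal is correct and follows essentially the same route as the paper: all $N$ polynomials satisfy the system \eqref{ozu} built from a rank-$r$ subfamily, so Theorem~\ref{mt} plus the crude binomial estimate $N\le (r+m)^m$ gives $r\ge N^{1/m}-m$. The point you flag about $j\le r$ is harmless, since the augmented Jacobian then has a repeated row and its rank is still exactly $r$, so the derivation of \eqref{ozu} applies verbatim.
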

 \begin{proof}
 Denote with $r$ the rank of $J_{p_1,\ldots, p_N}$, and suppose w.l.o.g. that $J_{p_1,\ldots, p_N}$ has maximal rank.  Then, we have that all $p_j$ satisfies \eqref{ozu}  for $j=1,\ldots, N.$ As they are linearly independent,  from Theorem \ref{mt}, we deduce straightforwardly that
 $$ N\leq {r+m-1\choose m-1}\leq (r+m)^m. \qedhere
 $$
 \end{proof}


\section{Entropy numbers for holomorphic functions and their Taylor coefficients}\label{Sec: Holo vs pol}

In this section we relate the entropy numbers of $f$ with the entropy numbers of the polynomials of the Taylor series expansion of $f$ and viceversa. We start with a simple example. Let $f\colon B_{c_0}\rightarrow c_0$ be defined as $f((x_n)_n)=(x_1,x_2^2,x_3^3,\ldots)$. It is clear that $P_mf(0)((x_n)_n)=(0,\ldots,x^{m}_m,0,\ldots)$, getting that $\dimsup_{B}P_mf(0)(B_E)=2$. On the other hand,  given $\ep>0$ we have, {for every $n \in \mathbb N$}

$${e_n(}f(\ep B_{c_0}){)=e_n(}\{(x_n)_n \subset \mathbb C\colon |x_n|\leq \ep^n\}{)},$$ which are the entropy numbers of the set from Example~\ref{ejemplo}. Thus, we see that  $\dimsup_{B} f(\ep B_{c_0})=\infty$ for every $\ep>0$. This example shows the (expected) fact that the homogeneous polynomials in the Taylor expansion of $f$ can have small dimensional images while $f$ maps any ball in an infinite dimensional set. In particular, by  Proposition \ref{prop: dimension} (and its proof) we have that
 $$\limsup_{n\rightarrow +\infty} e_n (P_mf(0)(B_{c_0}))^{1/n} \le \frac{\sqrt{2}}{2} \text{ for all }m$$
 while, on the other hand, $$ \qquad \limsup_{n\rightarrow +\infty} e_n (f(\ep B_{c_0}))^{1/n}=1.$$ This  shows that a (uniformly) fast decay of the entropy numbers of $P_mf(0)(B_E)$ does not imply a fast decay in the entropy numbers of $f(\ep B_E)$ for $\ep>0$.

Recall that, by  Example~\ref{ejemplo},  the sequence $(e_n(f(\ep B_{c_0})))_n $ belongs to $\ell_1$  for every $\ep>0$. The following example shows a bit more.

\begin{example}
  There is an holomorphic function $f\colon B_{c_0}\rightarrow c_0$ such that every polynomial of its Taylor expansion at $0$ has finite rank (hence, for every $\ep>0$, $\dim_B P_mf(0) (\ep B_{c_0})$ is finite) but  for any  $\ep>0$ the sequence $(e_n(f(\ep B_{c_0}))_n$ does not belong to $\ell_p$ for any  $1\leq p<\infty$.
\end{example}
\begin{proof}

Consider $(\sigma_m)_m$ the partition of the natural numbers such that each $\sigma_m$ is a finite set with $m!$ consecutive elements:
$$
\sigma_1=\{1\}; \quad \sigma_2=\{\underbrace{2,3}_{2!}\}; \sigma_3=\{\underbrace{4,5,6,7,8,9}_{3!}\}; \quad \sigma_4=\{\underbrace{\ldots}_{4!}\}; \quad \ldots
$$
and define $f\colon B_{c_0}\rightarrow c_0$ by $$f((x_n)_n)=(x_1,\underbrace{x_2^2,x_3^2}_{2!}, \underbrace{x_4^3,x_5^3,\ldots,x_9^3}_{3!},\ldots).$$  In other words, the $j$-th coordinate of $f((x_n)_n)$ is $x_j^N$ if $j\in \sigma_N$.
Note that for every $m\in \mathbb N$ we have  $$P_mf(0)((x_n)_n)=(0,0,\ldots,\underbrace{x_j^m,x_{j+1}^m,\ldots,}_{m!},0\ldots),$$ where $j, j+1,\ldots \in \sigma_m.$ Now, denote by $\Pi_{\sigma_m}\colon c_0\rightarrow c_0$ the projection onto the coordinates  belonging to $\sigma_m$.
To see that $ (e_n(f(\ep B_{c_0})))_n$ does not belong to $\ell_p$ for any $\varepsilon >0$, we may suppose that $\ep=2^{-r}$ for some $r\in \mathbb N$.
Note that a sequence $(x_n)_n $ belongs to $ f(\Pi_{N}(2^{-r}B_{c_0}))$ if and only if $|x_j|\leq 2^{-rN}$ for $j \in \sigma_N$ and $x_j=0$ otherwise. Then, applying for instance \cite[1.3.2]{CaSt}, we have
$$
e_{N!+1}\left(\Pi_{\sigma_N} f(2^{-r}B_{c_0})\right)=\sup_{1\leq k \leq N!} 2^{- \frac{N!}{2k}} 2^{-rN}=\frac{2}{\sqrt 2} \ 2^{-rN},
$$
Now, for $1\leq p<\infty$ we  have
$$
\begin{array}{rl}
\dsum_{n=1}^{\infty} e_n(f(2^{-r}B_{c_0}))^p&=e_1(f(2^{-r}B_{c_0}))^p+ \dsum_{N=1}^{\infty} \dsum_{n=N!+1}^{(N+1)!} e_n(f(2^{-r}B_{c_0}))^p\\
&\geq  \dsum_{N=2}^{\infty} \dsum_{n=N!+1}^{(N+1)!} e_{N!+1}(f(2^{-r}B_{c_0}))^p\\
&\geq  \dsum_{N=2}^{\infty} \dsum_{n=N!+1}^{(N+1)!}e_{N!+1}(\Pi_{\sigma_N}(f(2^{-r}B_{c_0})))^p\\
&\geq\dsum_{N=2}^{\infty} ((N+1)!-N!) 2^{-p(\frac{1}{2} + rN)} \\
&\geq \dsum_{N=2}^{\infty} 2^{-\frac{1}{2}} N! N(2^{-r p})^{N}.
\end{array}
$$
Since the last term diverges for every $r$ and $p$, we are done. 
\end{proof}
Note that in the above example, $f(\Pi_{N}(2^{-r}B_{c_0}))$ coincides with $P_Nf(0)(2^{-r}B_{c_0})$ and, also, that the dimension of  $ P_mf(0)(2^{-r}B_{c_0})$ is finite for every $m$ but goes to infinity as $m$ does. It would be interesting to know if the sequence $(e_n(f(\ep B_E)))_n$ must decay fast if the dimensions of $ P_mf(0)(2^{-r}B_{c_0})$ are uniformly bounded.

\medskip 

The last goal of  this note is to find some bounds for the entropy numbers of $P_mf(x_0)(\ep B_E)$ in terms of properties of $f$.
Let us start by taking  a  standard  approach. Following  (the proof of) \cite[Proposition~3.4]{ArSc}, for an holomorphic mapping $f\colon U\rightarrow F$ and $x_0\in U$, there is $\ep>0$ such that \begin{equation}\label{eq:AS}
                                         P_mf(x_0)(\ep B_E)\subset \overline{\coe(f(x_0+\ep B_E))},
                                       \end{equation} where $\coe$ denotes the absolutely convex hull of a set.  Thus,  a good natural starting point to estimate the entropy numbers of $P_mf(x_0)(\ep B_E)$ in terms of those of $f(x_0+\ep B_E)$ is to estimate
$e_n(\coe(f(x_0+\ep B_E))$ in terms of $e_n(f(x_0+\ep B_E))$. Now, suppose that $\dimsup_B f(x_0+\ep B_E)=N<\infty$. By \cite[Corollary~5]{Foll}, we get that for every $\ep>0$, $$\E_n (f(x_0+\ep B_E))\leq 2(n+1)^{-1/(N+\ep)}.$$ From this estimate and   \cite[Proposition~4.5]{CKP},  we get \begin{equation}\label{eq:follo}
  e_n(\coe(f(x_0+\ep B_E)))\leq C (n+1)^{-1/N}.
\end{equation} for some $C>0$ (note that, when we  take $\coe$, we \emph{pay} the price of changing $\E_n$ to $e_n$). Finally,  \eqref{eq:AS} and \eqref{eq:follo} give the following estimate for the entropy numbers of $P_mf(x_0)(\ep B_E)$.
$$
e_n(P_mf(x_0)(\ep B_E))\leq e_n(\coe(f(x_0+\ep B_E)))\leq C (n+1)^{-1/N}.
$$
This bound allows us to deduce for example, that $(e_n(P_mf(x_0)(\ep B_E)))_n \in \ell_p$ for every $p>N$.
Our last theorem improves this claim, showing that under the same assumptions we actually have $(e_n(P_mf(x_0)(\ep B_E)))_n \in \ell_p$ for every $p>1$.

\begin{theorem}\label{prop: entropy en ellp} Let $E$ and $F$ be Banach spaces, $U\subset E$ be an open set, $x_0\in U$ and  $\ep>0$ be such that $x_0+\ep B_E \subset U$. Let $f\colon U\to F$ be a holomorphic function such that $\dimsup_B f(x_0+\ep B_E)<\infty$. Then, $(e_n(P_mf(x_0)(B_E))_{n\in \mathbb N} \in \ell_p$ for every $p>1$ and every $m \in \mathbb N$.
\end{theorem}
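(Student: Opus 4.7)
The plan is to combine Proposition~\ref{prop: dimension} with Lemma~\ref{Lemma: p.prin} in a direct fashion: first use the hypothesis to convert finite box dimension into exponential decay of the entropy numbers of $f(x_0+\ep B_E)$, then transfer that decay to $P_mf(x_0)(B_E)$ via Lemma~\ref{Lemma: p.prin}, and finally read off the $\ell_p$ conclusion.

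Set $K:=f(x_0+\ep B_E)$. I would first verify that $K$ satisfies the hypotheses of Proposition~\ref{prop: dimension}. Connectedness is immediate, since $x_0+\ep B_E$ is connected and $f$ is continuous. Total boundedness follows from the assumption $\dimsup_B K<\infty$, which forces $N(K,\delta)<\infty$ for every $\delta>0$. Proposition~\ref{prop: dimension} then yields constants $C>0$ and $\beta\in(0,1)$ such that
\begin{equation}\label{eq:expdecayplan}
e_n(f(x_0+\ep B_E))\leq C\,\beta^n\qquad\text{for every } n\in\mathbb N.
\end{equation}

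Next I would invoke Lemma~\ref{Lemma: p.prin}, which relates $e_N(P_mf(x_0)(B_E))$ to $e_n(f(x_0+\ep B_E))$ for indices $n$ and $N$ linked by an explicit function depending on $m$ and $\ep$. Substituting \eqref{eq:expdecayplan} into that relation produces a bound of the form $e_N(P_mf(x_0)(B_E))\leq C'\,\beta^{h(N)}$, where $h(N)\to\infty$ as $N\to\infty$ at a rate determined by inverting the index correspondence of Lemma~\ref{Lemma: p.prin}. Such a bound decays strictly faster than any power $N^{-\alpha}$, so $(e_N(P_mf(x_0)(B_E)))_N$ is summable in every $\ell_p$ with $p>1$; the summability check is then a direct comparison with a geometric series, in the spirit of the calculation carried out in Example~\ref{ejemplo}.

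The main technical step has been pushed into Lemma~\ref{Lemma: p.prin}, where a Cauchy-type representation of $P_mf(x_0)$ in terms of values of $f$ on circles around $x_0$ has to be turned into a covering estimate linking the two sequences of entropy numbers. Once that lemma is granted, the proof of the present proposition is essentially the short summability computation just described; the only point to verify is that the $m$- and $\ep$-dependent constants coming from Lemma~\ref{Lemma: p.prin} are harmless, i.e.\ they are absorbed into the prefactor $C'$ and leave the rate $\beta\in(0,1)$ untouched, which they do.
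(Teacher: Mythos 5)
Your overall plan (Proposition~\ref{prop: dimension} to get $e_n(f(x_0+\ep B_E))\le C\beta^n$, then Lemma~\ref{Lemma: p.prin} to transfer the decay) is the paper's route, but the quantitative heart of the argument is wrong. Lemma~\ref{Lemma: p.prin} gives $e_{(n-1)C_n+1}(P_mf(x_0)(\ep B_E))\le 2e_n(f(x_0+\ep B_E))$ with $C_n=\lceil C/e_n(f(x_0+\ep B_E))\rceil$, so the index at which you gain the bound is $N_n=(n-1)C_n+1\approx (n-1)C/e_n(f)$, which grows \emph{exponentially} in $n$ precisely because $e_n(f)$ decays exponentially. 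Inverting this correspondence gives $n\sim\log N$ and hence, at the checkpoints, $e_{N_n}(P_mf(x_0)(\ep B_E))\lesssim e_n(f)\approx (n-1)C/N_n$, i.e.\ a decay of order roughly $(\log N)/N$ — nowhere near your claimed bound $C'\beta^{h(N)}$ that ``decays strictly faster than any power''. If your claim were true you would obtain $(e_N(P_mf(x_0)(B_E)))_N\in\ell_p$ for every $p>0$, including $p=1$, which the paper explicitly leaves open (Question~\ref{q1}); this is a clear sign that the inversion of the index correspondence was not actually carried out. Relatedly, the assertion that the constants from Lemma~\ref{Lemma: p.prin} are ``harmless'' and ``leave the rate $\beta$ untouched'' is exactly what fails: $C_n$ is not a constant, it depends on $e_n(f)$ and is the whole difficulty.

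A second missing ingredient: to sum over the blocks $[(n-1)C_n+1,\,nC_{n+1}]$ you must control the block lengths $nC_{n+1}-(n-1)C_n$, which involve $1/e_{n+1}(f)$; without a lower bound on $e_{n+1}(f)$ in terms of $e_n(f)$ these lengths could be arbitrarily large compared with $1/e_n(f)$ and the series could diverge. The paper supplies this via Lemma~\ref{prop: liminf en} (connectedness of $f(x_0+\ep B_E)$ gives $\liminf e_{n+1}/e_n\ge 1/5$), which yields $nC_{n+1}-(n-1)C_n\le n^2/e_n(f)$ and reduces the summability to $\limsup_n\bigl(n^2\,e_n(f)^{p-1}\bigr)^{1/n}<1$; this is exactly where the hypothesis $p>1$ enters, together with $\limsup_n e_n(f)^{1/n}<1$ from Proposition~\ref{prop: dimension}. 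Your proposal never uses connectedness or the ratio lemma and never explains the role of $p>1$, so as written the argument has a genuine gap rather than a merely cosmetic one.
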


Before proving the theorem, we need a technical lemma. In what follows, for $a\in \mathbb R$, we write $\lceil a\rceil=\min\{k\in\mathbb{Z}| k\geq a\}$.

\begin{lemma}\label{Lemma: p.prin} Let $E$ and $F$ be Banach spaces, $U\subset E$ be an open set, $x_0\in U$ and  $\ep>0$ be such that $x_0+\ep B_E \subset U$. For an holomorphic function $f\colon U\to F$ and $n,m \in \mathbb N$, the following inequality holds
$$
e_{(n-1)C_n+1}(P_mf(x_0)(\ep B_E))\leq 2 e_n(f(x_0+\ep B_E))
$$
where $C_n=\left\lceil\dfrac{C}{e_n(f(x_0 +\ep B_E))}\right\rceil$ for some positive constant $C=C(f)$ which depends only on $f$.
\end{lemma}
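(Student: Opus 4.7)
The plan is to use the Cauchy integral formula to represent $P_mf(x_0)(x)$ as an average of values of $f$ along a unit circle, discretize this integral by a Riemann sum over $K$ equispaced angles, and then quantize each sampled value by replacing it with the center of a ball from a $2^{n-1}$-point $\delta$-net of $f(x_0+\varepsilon B_E)$. The distinct quantized sums that can arise then form a covering of $P_mf(x_0)(\varepsilon B_E)$, and the parameter $K$ must be chosen of order $1/\delta$ so that the Riemann-sum error matches the net radius.

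Concretely, let $\delta:=e_n(f(x_0+\varepsilon B_E))$, fix $\eta>0$, and pick centers $y_1,\ldots,y_{2^{n-1}}\in F$ with $f(x_0+\varepsilon B_E)\subset \bigcup_i B_F(y_i,\delta+\eta)$. For $x\in\varepsilon B_E$, Cauchy's formula gives
\[
P_mf(x_0)(x) \;=\; \frac{1}{2\pi}\int_0^{2\pi} f(x_0+e^{i\theta}x)\,e^{-im\theta}\,d\theta,
\]
valid since $x_0+e^{i\theta}x\in x_0+\varepsilon B_E\subset U$ for every $\theta$. With $\theta_k=2\pi k/K$ introduce
\[
S_K(x)=\frac{1}{K}\sum_{k=0}^{K-1} f(x_0+e^{i\theta_k}x)\,e^{-im\theta_k}, \qquad T_K(x)=\frac{1}{K}\sum_{k=0}^{K-1} y_{i_k(x)}\,e^{-im\theta_k},
\]
where $i_k(x)\in\{1,\ldots,2^{n-1}\}$ is any index satisfying $\|f(x_0+e^{i\theta_k}x)-y_{i_k(x)}\|<\delta+\eta$. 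Averaging the quantization errors yields $\|S_K(x)-T_K(x)\|<\delta+\eta$, while the standard Lipschitz estimate for Riemann sums on the circle gives $\|P_mf(x_0)(x)-S_K(x)\|\le \pi L''/K$, where $L''\le \|Df\|_\infty\,\varepsilon + m\,\|f\|_\infty$ is a Lipschitz constant for the integrand $\theta\mapsto f(x_0+e^{i\theta}x)\,e^{-im\theta}$ (the sup norms being taken over $x_0+\varepsilon B_E$). Setting $C:=\pi L''$ and $K:=C_n$ forces $\pi L''/K\le \delta$, so $\|P_mf(x_0)(x)-T_K(x)\|\le 2\delta+\eta$.

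Since $T_K(x)$ is completely determined by the tuple $(i_0(x),\ldots,i_{K-1}(x))\in\{1,\ldots,2^{n-1}\}^K$, it takes at most $(2^{n-1})^{C_n}=2^{(n-1)C_n}$ values as $x$ ranges over $\varepsilon B_E$. Therefore $P_mf(x_0)(\varepsilon B_E)$ is covered by $2^{(n-1)C_n}$ balls of radius $2\delta+\eta$, and letting $\eta\to 0$ one concludes
\[
\E_{2^{(n-1)C_n}}\bigl(P_mf(x_0)(\varepsilon B_E)\bigr)\le 2\delta,
\]
which rewrites as $e_{(n-1)C_n+1}(P_mf(x_0)(\varepsilon B_E))\le 2\,e_n(f(x_0+\varepsilon B_E))$, as required.

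The main obstacle is producing a Lipschitz constant $L''$ that is independent of $n$ and uniform in $x\in\varepsilon B_E$: this requires $\|f\|_\infty$ and $\|Df\|_\infty$ to be finite on $x_0+\varepsilon B_E$. Boundedness of $f$ is automatic since total boundedness of $f(x_0+\varepsilon B_E)$ is implied by the finiteness of $e_n$, but boundedness of $Df$ demands Cauchy estimates on a slightly enlarged ball inside $U$. The cleanest workaround is to prove the lemma first for $\varepsilon'<\varepsilon$ (so that a closed $\varepsilon'$-ball still lies well inside $U$ and Cauchy's formula produces a finite $\|Df\|_\infty$ on $x_0+\varepsilon' B_E$), absorbing the $\varepsilon'$-dependence into $C$, and then passing to the limit $\varepsilon'\nearrow\varepsilon$ using the monotonicity of entropy numbers with respect to set inclusion.
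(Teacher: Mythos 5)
Your core mechanism is the same as the paper's: write $P_mf(x_0)(x)$ via the Cauchy integral over a circle, cut $[0,2\pi]$ into roughly $C/e_n(f(x_0+\varepsilon B_E))$ pieces, and replace the values of $f$ by points of a $2^{n-1}$-element net, so that the resulting approximants form at most $2^{(n-1)C_n}$ points, each within $2e_n(f(x_0+\varepsilon B_E))$ of some $P_mf(x_0)(x)$; the index bookkeeping $e_{(n-1)C_n+1}=\E_{2^{(n-1)C_n}}$ is correct. The genuine deviation is in the discretization: you replace the \emph{whole} integrand $f(x_0+e^{i\theta}x)e^{-im\theta}$ by a Riemann sum, so your modulus-of-continuity constant is $L''\le \|Df\|_\infty\,\varepsilon+m\|f\|_\infty$ and hence $C=\pi L''$ depends on $m$. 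The lemma fixes one constant $C=C(f)$ before quantifying over both $n$ and $m$, so as written you prove a formally weaker statement (harmless for the application in Theorem \ref{prop: entropy en ellp}, where $m$ is fixed, but not the lemma as stated). The paper avoids this by approximating only $f(e^{it}x)$ by the constant $z_j$ on each subinterval $J_j$ and integrating the oscillating factor exactly, i.e., comparing $P_mf(x_0)(x)$ with $\frac{1}{2\pi}\sum_j\int_{J_j}z_j e^{-itm}\,dt$; then only the Lipschitz constant of $t\mapsto f(e^{it}x)$ enters, and $C$ is independent of $m$. This is a one-line repair of your argument.

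Your closing workaround for the finiteness of the Lipschitz constant is also not sound as described. Proving the lemma at radius $\varepsilon'<\varepsilon$ and invoking monotonicity only bounds the entropy numbers of $P_mf(x_0)(\varepsilon' B_E)$, which is the \emph{smaller} set, not the set on the left-hand side of the lemma; to pass to $\varepsilon$ you would have to use $m$-homogeneity, $e_k(P_mf(x_0)(\varepsilon B_E))=(\varepsilon/\varepsilon')^m e_k(P_mf(x_0)(\varepsilon' B_E))$, which spoils the constant $2$ by an $m$-dependent factor, and in addition your $C(\varepsilon')$ (hence $C_n$) may blow up as $\varepsilon'\nearrow\varepsilon$, so the index in the conclusion drifts. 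Note that the paper itself simply sets $C=\sup_{x\in B_E}\|P_1f(x)\|$ and tacitly assumes this supremum finite; so the cleanest course is to state that implicit hypothesis (or observe that it holds in the situations where the lemma is applied) rather than to attempt the limiting argument.
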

\begin{proof}
	We may suppose that $x_0=0$ and $\ep=1$. Fix $n, m\in \mathbb N$. Given $\delta>0,$ there exists $M=\{y_1,y_2,\ldots,y_{2^{n-1}}\}\subset Y$ such that $f(B_E)\subset \bigcup_{k=1}^{2^{n-1}} y_k +\left(e_n(f(B_E))+\delta\right) B_F$.
	
Write $C=\sup_{x \in B_E} \|P_1f(x)\|$ and let $C_n=\left\lceil\dfrac{2\pi C}{e_n(f(B_E))}\right\rceil$. We split the interval $[0,2\pi]$ into $C_n$ disjoint intervals $J_1,\ldots J_{C_n}$ of length $\frac{2\pi}{C_n}$. Note that, for $x\in B_E$, and $t_0, t_1$ in one of this intervals, we have
$$
\|f(e^{it_0}x)-f(e^{it_1}x)\|\leq C |e^{it_0}-e^{it_1}|\leq C \frac{2\pi}{C_n}\leq e_n(f(B_E)).
$$

As a consequence, if  some  $t_0$ and some $y_k \in M$ satisfy $\|y_k - f(e^{it_0}x)\|\leq e_n (f(B_E))$ then, for  any other  $t$ in the same interval as $t_0,$ we have $\|y_k -f(e^{it}x)\|\leq 2 e_n (f(B_E))$. We define the set
$$L=\{y \in F\colon y=\dfrac{1}{2\pi} \displaystyle \sum_{j=1}^{C_n} \displaystyle \int_{J_j} z_j e^{-itm} dt,\text{ for some } z_1,\dots,z_{C_n} \in M\}.$$ Note that $L$ has $\left(2^{n-1}\right)^{C_n}=2^{C_n(n-1)}$ elements. The proof is complete if we show that for  $x \in B_E$, there exists $y \in L$ such that $\|y-P_mf(0)(x)\|\leq 2e_n(f(B_E))$. By the Cauchy integral formula (see for instance \cite[Corollary~7.3]{Mu}) we have
	$$
	P_mf(0)(x)=\frac{1}{2\pi} \int_0^{2\pi} f(e^{it}x) e^{-itm} dt=\frac{1}{2\pi} \displaystyle \sum_{j=1}^{C_n} \displaystyle \int_{J_j}f(e^{it}x) e^{-itm} dt.
	$$
	For each $j$, take $z_j \in M$ such that $\|z_j -f(e^{it}x)\|\leq 2 e_n(f(B_E))$ for all $t \in J_j$. Then, $y=\frac{1}{2\pi} \displaystyle \sum_{j=1}^{C_n} \displaystyle \int_{J_j} z_j e^{-itm} dt \in L$ and
	
		$$
		\begin{array}{rl}
			\|y-P_nf(0)(x)\|=&\displaystyle\left\|\frac{1}{2\pi} \sum_{j=1}^{C_n} \int_{J_j}\left(f(e^{it}x)-z_j\right) e^{-itm} dt\right\| \\
			\leq&\displaystyle\frac{1}{2\pi}\sum_{j=1}^{C_n}\int_{J_{j}}2 e_n (f(B_E)) dt\\
			= & 2e_n(f(B_E)).  \hfill \qedhere
		\end{array}
		$$
\end{proof}

\begin{proof}[Proof of Theorem \ref{prop: entropy en ellp}]
By  Lemma~\ref{Lemma: p.prin},  given $p>1$ we have

$$
\begin{array}{rl}
\displaystyle\sum_{n=1}^{\infty} e_n(P_mf(0)(B_E))^p =&\displaystyle \sum_{j=1}^{\infty}\sum_{n=(j-1)C_{j}+1}^{j \, C_{j+1}} e_n(P_mf(0)(B_E))^p\\
\leq &\displaystyle \sum_{n=1}^{\infty} (nC_{n+1} - (n-1)C_{n}) e_{(n-1)C_{n}+1}(P_mf(0)(B_E))^p\\
\leq &\displaystyle 2^p\sum_{n=1}^{\infty} (nC_{n+1} - (n-1)C_{n})e_{n}(f(B_E))^{p},
\end{array}
$$

We  finish the proof by showing that $$\displaystyle \limsup_{n\rightarrow \infty}\sqrt[n]{(nC_{n+1} - (n-1)C_{n})e_{n}(f(B_E))^{p}}<1.$$
Since $f(B_E)$ is a connected set, by Proposition~\ref{prop: liminf en} we have $$\displaystyle \liminf_{n\rightarrow \infty}\dfrac{e_{n+1}(f(B_E))}{e_n(f(B_E))}\geq\frac{1}{5}.$$ Thus, there exists   $n_0 \in \mathbb N$ such that for any $n>n_0$,
$$
\frac{n}{n^2+(n-1)C} \left(\frac{C}{\frac{e_{n+1}(f(B_E))}{e_n(f(B_E)}}+e_n(f(B_E))\right)<1.
$$
This implies that for every $n>n_0$
$$
n\left(\frac{C}{e_{n+1}(f(B_E))} +1\right) - \frac{(n-1)C}{e_n(f(B_E))}<\frac{n^2}{e_n(f(B_E))},$$
and since $\frac{C}{e_{n}(f(B_E))} \leq C_n\leq \frac{C}{e_{n}(f(B_E))}+1$, we get the inequality
$$
nC_{n+1}-(n-1)C_n \leq \frac{n^2}{e_n(f(B_E))}.
$$
From this last inequality we obtain that,
\begin{equation}\label{eq:4}
\sqrt[n]{(nC_{n+1}-(n-1)C_n)e_n(f(B_E))^p}\leq \left(\sqrt[n]{e_n(f(B_E))}\right)^{p-1} \sqrt[n]{n^2}.
\end{equation}

Since we are assuming that $\dimsup_B f(B_E)<\infty$, by Proposition~\ref{prop: dimension} we have that $\limsup_{n\rightarrow \infty} \sqrt[n]{e_n(f(B_e))}<1$. This and \eqref{eq:4} complete the proof.
\end{proof}

We do not know if Proposition~\ref{prop: entropy en ellp} holds for $p=1$. More precisely, we have the following question

\begin{questions}\label{q1} Let $E$ and $F$ be Banach spaces, $U\subset E$ an open set, and $x_0 \in U$. Take $f\colon U\rightarrow E$ an holomorphic function and suppose that there exists $\ep>0$ such that $\dim_B f(x_0+\ep B_E)<\infty$. Is it true that $(e_n(P_mf(x_0)(B_E))_n$ belongs to $ \ell_1$ for all  $m\in \mathbb N$?
\end{questions}

A probably more natural question, whose positive answer would clearly imply a positive answer to Question ~\ref{q1}, is the following. 

\begin{questions}\label{q2}
Let $E$ and $F$ be Banach spaces, $U\subset E$ an open set, and $x_0 \in U$.  Take $f\colon U\rightarrow E$ an holomorphic function and suppose that there exists $\ep>0$ such that $\dim_B f(x_0+\ep B_E)<\infty$. Is it true that for every $m\in \mathbb N$,  $\dimsup_B P_mf(x_0)(B_E)<\infty$?
\end{questions}

\bigskip

\noindent\textbf{Acknowledgments:} We thank Richard Aron whose questions motivated this work.  We also thank Pablo Gonz\'alez-Mazon, Gabriela Jeronimo and Daniel Perrucci for useful conversations regarding Theorem \ref{thm: BoxdimensionPoly}.

\end{document}